\theoremstyle{plain}
\newtheorem{thm}{Theorem}
\newtheorem{prop}[thm]{Proposition}
\newtheorem{lemma}[thm]{Lemma}
\newtheorem{maintheorem}{Theorem}
\newtheorem{claim}[thm]{Claim}
\newcommand{\re}{{\mathbb R}}
\newcommand{\dist}{\operatorname{dist}}
\title[Stochastic stability of sectional-Anosov flows]{Stochastic stability of sectional-Anosov flows}
\author{R.J. Metzger}
\address{Instituto de Matem\'atica y Ciencias Afines \, IMCA\\Calle Los Bi\'ologos 245 \\ Urb. San C\'esar La Molina, Lima 12\\Lima, Per\'u.}
\email{metzger@imca.edu.pe,roger@impa.br}
\author{C.A. Morales}
\address{Instituto de Matem\'atica\\ Universidade Federal do Rio de Janeiro,
P. O. Box 68530, 21945-970 Rio de Janeiro, Brazil.}
\email{morales@impa.br}
\subjclass[2010]{58F15,60J60}
\keywords{SRB measure, Sectional-Anosov, Flow}
\thanks{CAM was partially supported by CNPq, FAPERJ and
PRONEX/DYN-SYS. from Brazil.}
\begin{document}
\begin{abstract}
A {\em sectional-Anosov flow} is a vector field on a compact manifold
inwardly transverse to the boundary such that the maximal invariant set is sectional-hyperbolic
(in the sense of \cite{mm}).
We prove that any $C^2$ transitive sectional-Anosov flow
has a unique SRB measure which is stochastically stable under small random perturbations.
\end{abstract}

\maketitle

\section{Introduction}

\noindent
We shall study the {\em SRB measures}
were discovered by Sinai, Bowen and Ruelle in the 70's.
More precisely, we will concerned with continuous-time systems, i.e., vector fields and their corresponding flows.
The motivation comes from flow's counterpart of this discovering \cite{br} implying
that such measures do exist for any $C^2$ Anosov flow on a compact manifold.
Naturally, this conduces to the following question: Can the existence SRB measures be proved for dynamical systems beyond the Anosov ones?
An important case is that of the {\em sectional-Anosov flows} defined in \cite{m}.
These flows extend the Anosov ones to include important examples
like the {\em geometric} and {\em multidimensional Lorenz attractors} \cite{abs}, \cite{gw}, \cite{bpv}
and, specifically, Lorenz's polynomial flow \cite{l}, \cite{t}.
We therefore ask if, likewise Anosov's, every $C^2$ sectional-Anosov flow on a compact manifold
carries a SRB measure.
Positive answer for the geometric Lorenz attractor is nowadays folcklore.
For general $C^2$ sectional-Anosov flows on compact $3$-manifolds the answer is positive (with unicity)
in the transitive case. This can be deduced from \cite{col}
(with the assumption that the periodic points are dense in the maximal invariant set)
and from \cite{appv} (without such an assumption). Recently, Sataev pursued these last two results to the codimension one nontransitive case \cite{s}.
Important examples which are not sectional-Anosov were considered by the first author \cite{me1}, \cite{me2}.

In this paper we shall give positive answer in the transitive case in any dimension
(extending so \cite{appv}, \cite{col} and, partially, \cite{s}).
More precisely, we prove that every $C^2$ transitive sectional-Anosov flow on a compact manifold
has a unique SRB measure. Furthermore, such a measure is stochastically stable under small random perturbations
(extending so Kifer \cite{k} who proved stochastic stability in the case of the geometric Lorenz attractor).
This result answers in positive to a question formulated to the first author by Viana in the specific case of the multidimensional Lorenz attractor.
Let us state our result in a precise way.

Hereafter $X$ will be a $C^1$ vector field of a compact manifold $M$ inwardly transverse to the boundary $\partial M$ (if nonempty).
Denote by $X_t$ the flow generated by $X$ and
define the {\em maximal invariant set}
$$
M(X)=\displaystyle\bigcap_{t\geq0}X_t(M).
$$
We say that $X$ is {\em transitive} if $M(X)=\omega(x)$ for some $x\in M(X)$, where
$\omega(x)$ is the {\em omega-limit set} of $x$,
$$
\omega(x)=\left\{y\in M:y=\lim_{k\to\infty}X_{t_k}(x)\mbox{ for some sequence }t_k\to\infty\right\}.
$$
We say that $\Lambda\subset M(X)$ is {\em invariant} if $X_t(\Lambda)=\Lambda$ for all $t\in\mathbb{R}$.
A compact invariant set $\Lambda$ is {\em hyperbolic} if there are
a tangent bundle decomposition $T_\Lambda M=E^s_\Lambda\oplus E^X_\Lambda\oplus E^u_\Lambda$
over $\Lambda$ as well as positive constants $K,\lambda$
and a Riemannian metric $\|\cdot\|$ on $M$ satisfying

\begin{enumerate}
\item
$E^s_x\neq0$ and $E^u_x\neq0$ for all $x\in \Lambda$;
\item
$\|DX_t(x)/E^s_x|\leq Ke^{-\lambda t}$, for every $x\in \Lambda$ and $t\geq0$;
\item
$E^X_\Lambda$ is the subbundle generated by $X$;
\item
$m(DX_t(x)/E^u_x)\geq K^{-1}e^{\lambda t}$, for every $x\in \Lambda$ and $t\geq0$, where
$m(\cdot)$ indicates the conorm operation.
\end{enumerate}
We say that $X$ is an {\em Anosov flow} if $M(X)$ is a hyperbolic set of $X$.

On the other hand, a compact invariant set $\Lambda$ is {\em sectional-hyperbolic}
if every singularity in $\Lambda$ is hyperbolic and, also, there are a decomposition $T_\Lambda M=E^s_\Lambda\oplus E^c_\Lambda$
of the tangent bundle over $\Lambda$ as well as positive constants $K,\lambda$
and a Riemannian metric $\|\cdot\|$ on $M$ satisfying
\begin{enumerate}
\item
$\|DX_t(x)/E^s_x\|\leq Ke^{-\lambda t}$ for every $x\in \Lambda$ and $t\geq 0$.
\item
$\frac{\|DX_t(x)/E^s_x\|}{m(DX_t(x)/E^c_x)}\leq Ke^{-\lambda t}$, for every $x\in\Lambda$ and
$t\geq0$.
\item
$|\det(DX_t(x)/L_x)|\geq K^{-1}e^{\lambda t}$
for every $x\in \Lambda$, $t\geq0$ and every two-dimensional subspace $L_x$ of $E^c_x$.
\end{enumerate}
We say that $X$ is a {\em sectional-Anosov flow} if $M(X)$ is a sectional-hyperbolic set.

A Borel probability measure $\mu$ of $M$ is {\em invariant} if $(X_t)_*(\mu)=\mu$ for every $t\geq0$.
If, additionally, it has positive Lyapunov exponent a.e. and, also,
absolutely continuous conditional measure on the corresponding unstable manifolds, then
we say that $\mu$ is an {\em SRB measure} of $X$
(see \cite{y} for further details).

Next we introduce some basics on
random perturbations of dynamical systems \cite{k}.
Consider the family of transition
probability measures
$P^\varepsilon(t,x,\cdot)$ on
$M$ given for every $x\in M$ and $t\in I\!\! R$ (or $t\in \mathbb{Z}_+$) and $\varepsilon>0$
small enough and define Markov chains $x^\varepsilon_t$, $ t \in  I\!\! R$ in the following way:
if $x^\varepsilon_t=x$ then $x^\varepsilon_{t+\tau}$ has probability $P^\varepsilon(\tau,x,A)$ of being in $A$.
The Markov chain $x^\varepsilon_t$ for $t\in I\!\! R$ is called a {\em small random perturbation} of a flow $X_t$
if for every continuous function $h$  on $M$, we have
$$\lim_{\varepsilon\rightarrow 0}\left| \int_MP^\varepsilon(t,x,dy)h(y)-h(X_t(x))\right| =0 \, .$$
Similarly, the Markov chain  $x^\varepsilon_n$ for $n\in \mathbb{Z}_+$ is called a small random perturbation of
a map $f$ if for every continuous function $h$ on $M$, we have
$$
\lim_{\varepsilon\rightarrow 0}\left| \int_MP^\varepsilon(n,x,dy)h(y)-h(f^n(x))\right| =0.
$$
A probability measure $\nu^\varepsilon$ on $M$ is a stationary measure for the
Markov chain $x^\varepsilon_t$ if for all Borel set $A$ and any $\tau>0$, we have
$$
\int_M\nu^\varepsilon(dx)P^\varepsilon(\tau,x,A)=  \nu^\varepsilon(A).
$$
Denote by ${\mathcal B}(M)$ the set of
borelians of $M$.
Suppose that $X$ has a unique SRB measure $\mu$.
Let $P^\varepsilon\colon I\!\! R^+\times
M\times {\mathcal B}(M)\to [0,1]$ be the
transition probability measures associated to
a fixed small random perturbation $x^\varepsilon_t$
of $X$ and
$\{\mu^\varepsilon\}_{\varepsilon>0}$ be a
family of stationary measures of $P^\varepsilon$.
We say that $\mu$ is {\em stochastically stable} if for every real number sequence
$\varepsilon_i\to 0^+$ such that
$\mu^{\varepsilon_i}\to \nu$ in the weak sense one has $\nu=\mu$.

{\em By stochastic stability under small diffussion-type random perturbations}
it is meant that we are going to use transition probabilities of the form
$$
P^\varepsilon(\tau,x,A) = \int_A p^\varepsilon(\tau,x,y) dy,
$$
where $dy$ means integration with respect to the natural Lebesgue measure of
the manifold and $p^\varepsilon(\tau,x,y) $ is a solution of the diffusion equation
$$
\frac{\partial p^\varepsilon}{\partial t} (t,x,y) = (\varepsilon L + X) p^\varepsilon (t,x,y)
$$
with $L$ being an elliptic operator. Note that the elliptic operator introduces
the posibility of collision with particle in a media (or heat equation),
that gives the random part of the Markov chains. Typical solution of this equation comes with a factor that has
Gaussian behaviour, namely, $p^\varepsilon (t,x,y) \sim \exp(\frac{-\dist(X_t(x),y)}{\varepsilon})$.

Now we state our result.

\begin{maintheorem}
\label{main}
Every $C^2$ transitive sectional-Anosov flow on a compact manifold
has a unique SRB measure which is stochastically stable
under small diffusion-type random perturbations.
\end{maintheorem}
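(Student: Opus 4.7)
The plan is to reduce the continuous-time problem to a discrete-time one by building a global cross-section $\Xi$ for the flow in a neighborhood of $M(X)$ and studying its Poincar\'e return map $R\colon\Xi\to\Xi$. Because $M(X)$ is sectional-hyperbolic, $DR$ inherits a dominated splitting $E^s\oplus E^c$ with uniform contraction on $E^s$ and sectional (volume) expansion on $E^c$. The map $R$ is singular along the intersections $\Xi\cap W^s(\sigma)$ for singularities $\sigma\in M(X)$, with unbounded derivative near this singular set but controlled in the way that allows for a Pesin-type analysis.

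Next I would integrate $E^s$ inside $\Xi$ to obtain a $C^{1+\alpha}$, absolutely continuous stable foliation $\mathcal{F}^s$ and quotient to obtain a lower-dimensional piecewise $C^{1+\alpha}$ map $\bar{R}\colon\overline{\Xi}\to\overline{\Xi}$. The sectional-expansion condition (3) in the definition of sectional-hyperbolicity gives $\bar{R}$ the non-uniform volume expansion needed to place it in the class of non-uniformly expanding maps with singularities studied in \cite{appv} and by Alves--Bonatti--Viana. From those results one extracts an absolutely continuous invariant probability $\bar{\mu}$ for $\bar R$; lifting along $\mathcal{F}^s$ produces a Gibbs $u$-state $\mu_R$ for $R$, and suspending by the return-time function yields a flow-invariant SRB measure $\mu$ for $X_t$. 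Transitivity of $X$ translates into mixing of $\bar{R}$ on the attractor, which gives uniqueness of $\bar\mu$ and hence of $\mu$ by a standard ergodic-decomposition argument.

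For stochastic stability I would show that the diffusion-type perturbations $x^\varepsilon_t$ of $X_t$ induce, via the first return to $\Xi$, a family of small random perturbations $R^\varepsilon$ of $R$; the Gaussian-like bound $p^\varepsilon(t,x,y)\sim\exp(-\dist(X_t(x),y)/\varepsilon)$ makes this reduction concrete, since the random return time has exponentially small tails. The sectional-hyperbolic splitting can be continued transversely to trajectories of the random flow, so a random stable foliation on $\Xi$ is available, and quotienting by it yields a small random perturbation $\bar{R}^\varepsilon$ of $\bar R$. Stochastic stability for non-uniformly expanding maps with singularities, available from the same framework that yielded $\bar{\mu}$, gives $\bar{\mu}^{\varepsilon}\to\bar{\mu}$ weakly, and lifting plus suspending then gives $\mu^\varepsilon\to\mu$, proving the theorem.

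The hard part will be to make each of these reductions rigorous in the presence of the singular set of $R$. Unlike Kifer's geometric Lorenz case, where $\Xi$ is two-dimensional and the quotient is one-dimensional, here $\Xi$ may have arbitrary codimension and $\Xi\cap W^s(\si(X))$ can be a nontrivial stratified set; one must show that the return time and its noisy counterpart have the integrability needed to build the suspension, and that the random stable foliation is absolutely continuous with Jacobians close to the deterministic ones uniformly in $\varepsilon$. Controlling the size of the noise inherited by $R^\varepsilon$ on the cross-section as the orbit passes close to a singularity, so that $\bar R^\varepsilon$ is genuinely a small random perturbation of $\bar R$, is the central technical obstacle.
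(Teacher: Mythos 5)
Your route is genuinely different from the paper's, and it contains a gap that is precisely the known obstruction to extending the three-dimensional theory of \cite{appv} to higher dimensions. The paper never passes to a cross-section: it works with the time-$\tau$ map $F=X_\tau$ and Kifer's machinery for random perturbations, proving (i) that the Markov chain has probability only $O(\varepsilon^\gamma)$ of lying in an $\varepsilon^{1-\gamma}$-neighborhood of $\si(X)$ at large times (Lemma \ref{l1}), and (ii) that chains avoiding the singularities see an effectively hyperbolic system (Proposition \ref{SHtoH1}), so Kifer's estimates yield $P^\varepsilon(n,x,[E,D])\le C\, mes^u(E)+O(\varepsilon)$. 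Any weak limit of stationary measures is then absolutely continuous along the unstable direction and hence SRB; uniqueness follows from transitivity and ergodicity of SRB measures, and stochastic stability is automatic because every weak limit must coincide with the unique SRB measure. Existence and stochastic stability are obtained in one stroke, with no suspension, no quotient map, and no deterministic construction of the measure beforehand.

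The gap in your argument is the step ``the sectional-expansion condition (3) gives $\bar R$ the non-uniform volume expansion needed to place it in the class of non-uniformly expanding maps.'' Sectional hyperbolicity controls the growth of \emph{two-dimensional areas} inside $E^c$, not of individual vectors. In dimension $3$ the central bundle modulo the flow direction is one-dimensional, so area expansion plus neutrality of the flow direction does force vector expansion on the quotient; this is what \cite{appv} exploits. When $\dim E^c\ge 3$ the quotient fiber has dimension $\ge 2$, and expansion of all $2$-planes is compatible with contraction of individual central vectors, so the Alves--Bonatti--Viana hypothesis $\limsup_n \frac1n\sum_{j<n}\log\bigl\|D\bar R(\bar R^j x)^{-1}\bigr\|^{-1}>0$ does not follow; the extraction of an absolutely continuous invariant measure $\bar\mu$ is therefore unjustified, and this is exactly why the higher-dimensional case required new methods (cf.\ \cite{ly} and the present paper). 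Secondary but real issues: the existence of a global cross-section with a tractable singular set for a higher-dimensional sectional-Anosov flow is asserted rather than constructed; the stable foliation is in general only H\"older continuous with absolutely continuous holonomies, not $C^{1+\alpha}$; and the reduction of the diffusion to a small random perturbation of $R$ degenerates near the singularities where return times are unbounded --- you flag this last point as the ``central technical obstacle,'' but it is not a detail to be deferred, it is the part of the problem that the paper's Lemma \ref{l1} is designed to circumvent.
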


This result (announced twice in \cite{mm} and \cite{mmm}) extend the recent paper by Leplaideur and Yang \cite{ly}.

\section{Preliminaries}

\noindent
This section is to give the results needed to prove
2.1 in \cite{k}, which is essentially a linear version of the perturbation and only need hyperbolic behaviour along orbits.
This can be achieved if the orbits remains outside the singularities as it is shown in the following propositions and lemmas.

Hereafter $X$ will be a $C^2$ transitive sectional-Anosov flow of a compact manifold $M$.

\begin{lemma}[Shadowing Lemma]\label{ShadLemSH}
There exists a constant $C$ such that if $x_0,\dots,x_n$ is a $\delta$-pseudo-orbit of $F=X_\tau$ satisfying
\begin{equation}
\min_{0\leq i\leq n}\dist(x_i,Sing(X))>C\delta,
\end{equation}
then one can find a point $y\in M$ such that
\begin{equation}
\max_{0\leq i\leq n}\dist(x_i,F^iy)\leq Cn\delta
\end{equation}
\end{lemma}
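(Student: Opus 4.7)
The plan is to reduce the lemma to a classical hyperbolic shadowing argument by first upgrading sectional-hyperbolicity, away from $\si(X)$, to a genuine hyperbolic splitting. The linear dependence on $n$ in the error bound reflects the contribution of the flow direction, which is neutral (not hyperbolic) and along which shadowing errors cannot be corrected and so accumulate at each step.

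First I would establish a hyperbolicity statement away from singularities. Extending the dominated splitting $E^s \oplus E^c$ of $M(X)$ continuously to a neighborhood, I claim that on any compact set $K \subset M \setminus \si(X)$ there is a continuous (but not invariant) refinement $E^c = \langle X \rangle \oplus E^{uu}$, where $E^{uu}$ is uniformly expanded by $DX_t$ for $t \geq 0$. This follows from $\|X(x)\| \geq c_K > 0$ on $K$ combined with the sectional expansion on $2$-planes $L \subset E^c$ containing $X$: since $\|DX_t X(x)\| = \|X(X_t x)\|$ remains bounded, sectional expansion forces unit vectors in the transverse complement of $\langle X\rangle$ inside $E^c$ to be uniformly expanded. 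The effective expansion and contraction rates depend on $c_K$ and thus on how close $K$ is allowed to approach $\si(X)$.

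Next I would set up a contraction/fixed-point shadowing argument for $F=X_\tau$. In local adapted coordinates around each $x_i$, aligned with the splitting $E^s \oplus \langle X\rangle \oplus E^{uu}$, one seeks a sequence $y_i$ close to $x_i$ with $y_{i+1}=F(y_i)$. Solving this step-by-step by contracting forward along $E^s$ and backward along $E^{uu}$ reduces the problem to a fixed-point equation for $(y_i)_{i=0}^n$, solvable by Banach's theorem once the hyperbolic constants along the pseudo-orbit are good enough. The error in the $E^s \oplus E^{uu}$ components can be kept of order $\delta$, independent of $n$; but in the neutral flow direction $\langle X\rangle$, the mismatch $\dist(F(x_i),x_{i+1})<\delta$ at each step produces an uncorrectable drift of order $\delta$ along the flow parameter. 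Accumulating these drifts over $n$ iterations gives the global error bound $Cn\delta$.

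The main obstacle is the quantitative control of the hyperbolic constants in terms of distance to $\si(X)$. The hypothesis only asks $\dist(x_i,\si(X))>C\delta$, so as $\delta\to 0$ the pseudo-orbit is permitted to approach singularities, and the rates of expansion of $E^{uu}$ and contraction of $E^s$ along such an orbit degrade. One must choose $C$ large enough that at the admissible scale $C\delta$ these rates are still sufficient to close the Banach fixed-point argument on a $\delta$-scale. This reduces to quantitative estimates for the flow in linearizing coordinates near each hyperbolic equilibrium in $\si(X)$, showing that the deterioration of the hyperbolic constants as one approaches $\si(X)$ is outweighed, for $C$ suitably large, by the $C\delta$ safety margin imposed on the pseudo-orbit.
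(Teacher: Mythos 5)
The paper states this Shadowing Lemma without proof, so there is no argument of record to compare yours against; I will assess the proposal on its own terms. Your strategy (hyperbolicity away from $\si(X)$ plus a fixed-point shadowing argument, with the neutral flow direction accounting for the factor $n$) is the natural one, but it has a genuine gap at exactly the point you flag and then dismiss: the quantitative degeneration of hyperbolicity near the singularities. The expansion you extract for a unit vector $v\in E^{uu}_x$ from the sectional condition is
$$
\|DX_t(x)v\| \;\ge\; K^{-1}e^{\lambda t}\,\frac{\|X(x)\|}{\|X(X_t(x))\|},
$$
obtained by dividing the area expansion of the plane spanned by $X(x)$ and $v$ by $\|DX_t(x)X(x)\|=\|X(X_t(x))\|$. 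The ratio $\|X(x)\|/\|X(X_t(x))\|$ is \emph{not} bounded below by a constant on the set of points at distance greater than $C\delta$ from $\si(X)$: along an orbit escaping a hyperbolic singularity whose unstable eigenvalue has real part $c$, it behaves like $e^{-ct}$, so the guaranteed per-step growth of $E^{uu}$ under $F=X_\tau$ is of order $e^{(\lambda-c)\tau}$, which is a \emph{contraction} whenever $c>\lambda$ (the typical Lorenz-like situation). No choice of the constant $C$ repairs this: enlarging $C$ shifts the admissible scale but does not change the ratio $\|X(x_i)\|/\|X(x_{i+1})\|$ along an escaping segment. Consequently the Banach fixed-point argument you describe does not close with $n$-independent contraction constants, and your final paragraph replaces the needed estimate by an assertion.

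Any proof along these lines must instead exploit the telescoping identity $\prod_i \|X(x_i)\|/\|X(x_{i+1})\|\approx\|X(x_0)\|/\|X(x_n)\|\ge \mathrm{const}\cdot C\delta$, i.e.\ trade per-step hyperbolicity for a global expansion bound that loses a factor of order $C\delta$ over the whole pseudo-orbit, and absorb that loss (together with the flow-direction drift) into the weak conclusion $\dist(x_i,F^iy)\le Cn\delta$; this is visibly where the unusual form of both the hypothesis ($>C\delta$) and the conclusion ($\le Cn\delta$) comes from, and it is the part of the argument your proposal leaves untouched. A secondary issue: the refinement $E^c=\langle X\rangle\oplus E^{uu}$ is not $DF$-invariant, so the block structure needed for the forward/backward contraction scheme is only approximate; one should work with the quotient $E^c/\langle X\rangle$ (the linear Poincar\'e flow) or verify that the off-diagonal terms are dominated. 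Neither point is fatal to the strategy, but both must be resolved before the argument constitutes a proof.
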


\begin{prop}
\label{SHtoH1}
There are 
positive constants $\gamma>0$ and $\alpha>0$, not depending on $y\in M$, such that
for any $y\in M$ that shadows a $\delta$-pseudo-orbit as above there is an invariant splitting 
$$
T_{F^ly}M=\tilde H_{F^ly}\oplus H^u_{F^ly},
\,\,\,\,\,\,\,\,\,\,l\in I\!\! N,
$$
satisfying
$$
\angle(\tilde H_{F^ly}, H^u_{F^ly})>\alpha
\,\,\,\,\,\,\,\mbox{ and }
\,\,\,\,\,\,\,\,
||DF^{-l}\zeta||\leq \gamma^{-1}e^{-\gamma l}||\zeta||,
\,\,\,\,\,\,\,\,\,\,\forall l\in I\!\! N.
$$
\end{prop}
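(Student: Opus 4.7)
The plan is to propagate the sectional-hyperbolic splitting from $M(X)$ out to the shadowing orbit of $y$ and then refine it. By the continuity theorem for dominated splittings, the decomposition $T_{M(X)}M=E^s\oplus E^c$ extends to a continuous $DX_t$-invariant dominated splitting on some neighborhood $U\supset M(X)$. Because Lemma \ref{ShadLemSH} forces $\dist(x_i,\si(X))>C\delta$ while the shadowing gives $\dist(x_i,F^iy)\leq Cn\delta$, choosing $C$ large and $\delta$ small with respect to $U$ and a fixed tubular neighborhood of $\si(X)$ keeps the orbit $\{F^ly\}$ inside $U$ and uniformly away from $\si(X)$. In particular $\|X(F^ly)\|$ is bounded between two positive constants, so $E^X_{F^ly}:=\mathbb{R}\cdot X(F^ly)\subset E^c_{F^ly}$ is a well-defined $DF$-invariant line field along the orbit.

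The crux is to refine $E^c_{F^ly}=E^X_{F^ly}\oplus E^u_{F^ly}$ with $E^u$ uniformly expanded by $DF$. I would pass to the quotient bundle $\hat E_{F^ly}:=E^c_{F^ly}/E^X_{F^ly}$ along the orbit, on which $DF$ induces a linear cocycle $\widehat{DF}$. For any $\zeta\in E^c\setminus E^X$ the sectional area expansion, applied to the $2$-plane $L=E^X\oplus\mathbb{R}\zeta$, gives $|\det(DF^l|_L)|\geq K^{-1}e^{\lambda\tau l}$; dividing by the flow-direction factor $\|X(F^ly)\|/\|X(y)\|$, which is uniformly bounded above since the orbit lies in a compact region away from $\si(X)$, yields $\|\widehat{DF}^l\hat\zeta\|\geq K_1^{-1}e^{\lambda_1 l}\|\hat\zeta\|$ with uniform constants $K_1,\lambda_1>0$. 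A standard cone-field/graph-transform argument then lifts this quotient expansion to a genuine $DF$-invariant subbundle $E^u_{F^ly}\subset E^c_{F^ly}$ uniformly cone-separated from $E^X$, with $DF$ expanding $E^u$ at essentially the same rate.

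Setting $\tilde H_{F^ly}:=E^s_{F^ly}\oplus E^X_{F^ly}$ and $H^u_{F^ly}:=E^u_{F^ly}$ gives the required splitting. The angle bound $\angle(\tilde H,H^u)>\alpha$ combines the uniform transversality of $E^s$ and $E^c$, inherited from the extension of the dominated splitting to $U$, with the uniform cone separation of $E^u$ from $E^X$ inside $E^c$. The contraction estimate $\|DF^{-l}\zeta\|\leq\gamma^{-1}e^{-\gamma l}\|\zeta\|$ for $\zeta\in H^u_{F^ly}$ is simply the inverse form of the uniform expansion of $DF^l$ on $E^u$, after picking $\gamma$ smaller than both $\lambda_1$ and $K_1^{-1}$.

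The main obstacle will be the uniformity of the constants $\alpha,\gamma$ over all admissible shadowing orbits. One must choose the constant $C$ in Lemma \ref{ShadLemSH} to dominate simultaneously the continuity modulus of the extended dominated splitting on $U$, the reciprocal of $\inf_{U'}\|X\|$ on a suitable subneighborhood $U'\Subset U\setminus\si(X)$, and the cone-field thresholds needed in the graph-transform step. Once this book-keeping is arranged, the quotient-cocycle estimate and the lifting argument produce the splitting with constants independent of $y$, and no further serious difficulty remains.
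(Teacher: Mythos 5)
You should first be aware that the paper states Proposition \ref{SHtoH1} without any proof, so there is no argument of the authors' to measure yours against; I can only judge the proposal on its own terms. Your overall route --- extend the dominated splitting $E^s\oplus E^c$ to a neighborhood of $M(X)$, quotient $E^c$ by the flow direction, convert the sectional (area) expansion on planes containing $X$ into expansion of the quotient cocycle, and lift back to an invariant subbundle $E^u$ by a cone/graph-transform argument, ending with $\tilde H=E^s\oplus E^X$ and $H^u=E^u$ --- is the standard and correct way to extract hyperbolic behaviour along orbits that avoid the singularities.

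The genuine gap is the uniformity claim, and it sits exactly where you wave at ``book-keeping.'' You assert that choosing $C$ large and $\delta$ small keeps $\{F^ly\}$ ``uniformly away from $\si(X)$.'' It does not: the hypotheses give $\dist(x_i,\si(X))>C\delta$ while the shadowing only gives $\dist(x_i,F^iy)\le Cn\delta$, so the triangle inequality yields $\dist(F^iy,\si(X))>C\delta-Cn\delta$, which is not even positive for $n\ge 1$; and even with the usual $n$-free shadowing bound $C\delta$, the guaranteed distance is of order $\delta$ and tends to $0$ with $\delta$. This matters because the constants you extract genuinely degenerate as the orbit approaches $\si(X)$: in your determinant computation the multiplicative prefactor is $\|X(y)\|/\|X(F^ly)\|$, which is bounded below only by a constant times $\dist(y,\si(X))$, and the angle between $E^X$ and the candidate $E^u$ inside $E^c$ also collapses near the singularities (for a point of $W^u(\sigma)$ close to $\sigma$, the only backward-contracted direction in $E^c$ is the flow direction itself, so no uniformly transverse $E^u$ can exist there). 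Hence $\alpha$ and $\gamma^{-1}$ cannot be taken independent of the distance of the orbit segment to $\si(X)$; at best they depend polynomially on that distance, hence on $\delta$ (or on $\varepsilon^{1-\gamma}$ in the application). Quantifying that dependence, rather than asserting uniformity, is precisely what Lemmas \ref{l1} and \ref{l2} need from this proposition, so the step you defer is not routine: it is the substantive point that distinguishes the sectional-Anosov case from the Anosov one.
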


Let $J(x)$ be the absolute value of the Jacobian of the derivative of $F$ at $x$, and $J_n(x)$
the absolute value of the Jacobian of the $F^n$ at $x$. Define also 
$d_n(x,y)=
\max\{\dist(F^kx,F^ky), 0\leq |k|\leq |n|\mbox{\ \ and \ \ }kn\geq 0\}$
and $K_\rho(x,n)=\{y\ :\ d_n(x,y)\leq \rho\}$.
 
\begin{prop}[Volume Lemma]\label{VoluLemSH}
Then there exists $\tilde \rho,C_\rho, C>0$ such that for
any positive $\rho\leq \tilde\rho,n\geq 0$ and $x\in M(X)$
\begin{equation}\label{vollema3.13}
C^{-1}_\rho\leq m(K_\rho(x,n) J_n(x)\leq C_\rho
\end{equation}
where $m$ is the Riemannian volume, and for each $y\in K_\rho(x,n)$
\begin{equation}\label{vollema3.14}
C^{-1}\leq J_n(x,n) (J_n(y))^{-1}\leq C
\end{equation}
If $X$ has no singularities (hence Anosov) and $F=X_\tau$ then (\ref{vollema3.13}) remains true and (\ref{vollema3.14}) must be replaced by 
\begin{equation}
C^{-1}\leq J_n(F^ux) (J_n(y))^{-1}\leq C
\end{equation}
with $|u|\leq c\rho$, where $c\geq 0$ depends only on $X$.
\end{prop}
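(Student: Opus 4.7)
The plan is to adapt the classical Bowen--Ruelle volume lemma to the sectional-hyperbolic setting, using the invariant splitting $T_x M = \tilde H_x \oplus H^u_x$ supplied by Proposition~\ref{SHtoH1}. Fix $x \in M(X)$ and write $F = X_\tau$.

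First, I would geometrically identify the Bowen ball $K_\rho(x,n)$. The uniform transversality $\angle(\tilde H_x, H^u_x) > \alpha$ allows a local chart near $x$ in which $\tilde H_x$ and $H^u_x$ appear as complementary Euclidean subspaces. Using this chart, one can sandwich $K_\rho(x,n)$ between two boxes of the form $B^{\tilde H}(c_i \rho) \times B^{H^u}\!\bigl(c_i \rho / \|DF^n|_{H^u_x}\|\bigr)$ for $i = 1, 2$: the $H^u$-extent is dictated by the terminal constraint $\dist(F^n x, F^n y) \leq \rho$ because $H^u$ is uniformly expanded forward by $F$, while the $\tilde H$-extent is dictated by the initial constraint because $\tilde H$ is not expanded forward in time. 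Integrating and relating the product to $J_n(x)$ via the factorization of the total Jacobian along the splitting then gives (\ref{vollema3.13}).

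Second, I would establish the bounded distortion (\ref{vollema3.14}). Telescoping
\[
\log J_n(x) - \log J_n(y) = \sum_{k=0}^{n-1} \bigl[\log J(F^k x) - \log J(F^k y)\bigr]
\]
and using the $C^{1+\alpha}$ regularity of $F$ (from $X \in C^2$), each term is bounded by $C \, \dist(F^k x, F^k y)^\alpha$. Decomposing $y - x$ along $\tilde H_x \oplus H^u_x$, the $\tilde H$-component of $\dist(F^k x, F^k y)$ decays forward at rate $e^{-\gamma k}$ while the $H^u$-component decays backward from time $n$ at rate $e^{-\gamma (n-k)}$. Summing yields a geometric series with total weight $\leq C \rho^\alpha$, independently of $n$, which gives (\ref{vollema3.14}).

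The main obstacle is the flow direction inside $\tilde H$: in a purely hyperbolic setting it is uniformly neutral and easy to handle, but in the sectional setting the angle between the flow and the remaining subbundles can degenerate as the orbit approaches $\si(X)$. Proposition~\ref{SHtoH1} delivers the uniform angle $\alpha$ and uniform exponential rate $\gamma$ precisely when the orbit stays a definite distance away from singularities, which is the situation secured by the shadowing hypothesis propagated from the Shadowing Lemma. In the Anosov case of the statement, the $F^u$-shift with $|u| \leq c\rho$ is the standard device that absorbs the residual neutral drift along the flow into a small reparametrization of time, explaining why (\ref{vollema3.14}) is modified in that setting.
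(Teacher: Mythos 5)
The paper never proves Proposition \ref{VoluLemSH}: it is stated in the Preliminaries without argument (the equation numbers (\ref{vollema3.13})--(\ref{vollema3.14}) are inherited from Kifer's book, to which the burden is implicitly deferred). So there is no in-paper proof to compare yours against, and I can only judge your sketch on its own terms. The architecture you choose --- sandwich the Bowen ball between product boxes adapted to the splitting of Proposition \ref{SHtoH1} for (\ref{vollema3.13}), then telescope the H\"older-continuous $\log J$ for (\ref{vollema3.14}) --- is the standard Bowen--Ruelle route and is the right general idea, but two of your steps fail as written.

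The decisive gap is the flow direction. The subbundle $\tilde H$ contains the span of $X$, which is neutral, so the $\tilde H$-component of $\dist(F^kx,F^ky)$ does \emph{not} decay forward at rate $e^{-\gamma k}$; your telescoped sum then contributes on the order of $n\rho^\alpha$ from the neutral direction instead of a convergent geometric series, and the distortion bound (\ref{vollema3.14}) does not follow. You correctly identify the $F^u$-shift as the cure in the Anosov case, but the same obstruction is present in the singular case, where the statement asserts (\ref{vollema3.14}) \emph{without} any time shift; your argument supplies no mechanism (cross-section, holonomy, or reparametrization) that makes this possible. Two further mismatches need attention. First, Proposition \ref{SHtoH1} provides the uniform angle $\alpha$ and rate $\gamma$ only along orbits shadowing pseudo-orbits kept a distance $C\delta$ from $Sing(X)$, whereas the Volume Lemma is claimed for every $x\in M(X)$ and every $n\geq 0$; near $Sing(X)$ the splitting degenerates and your construction has no input. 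Second, your box has volume comparable to $\rho^{\dim M}\big/\,|\det(DF^n|_{H^u_x})|$, i.e.\ to the reciprocal of the \emph{unstable} Jacobian, while the proposition (as the paper defines $J_n$) uses the full Jacobian; these differ by the stable contraction factor, which is unbounded in $n$ (at a hyperbolic singularity the product $m(K_\rho(x,n))\,J_n(x)$ decays exponentially, so the lower bound in (\ref{vollema3.13}) cannot hold literally). Any complete proof must either reinterpret $J_n$ as the Jacobian along $H^u$ or explain why the discrepancy is controlled on $M(X)$.
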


\section{Proof of Theorem \ref{main}}

\noindent
Let $X$ be a $C^2$ transitive sectional-Anosov flow of a compact manifold $M$.
Denote by $Sing(X)$ the set of
singularities of $X$.
If $Sing(X)=\emptyset$
then $X$ is Anosov
and then
the result follows from classical results \cite{k}.
Then, we can assume that
$X$ is a genuine sectional-Anosov flow, i.e.,
$Sing(X)\neq\emptyset$.
We keep in mind the notation concerning small
random perturbations as in the Introduction.

The proof of Theorem \ref{main}
is based on following lemmas to be proved in the final sections.

\begin{lemma}
\label{l1}
Given $\epsilon>0$ small there is $C>0$ such that
if $\gamma>0$ is small, then
$\forall x\in M$, $\forall k\geq \log(\varepsilon^{-m})$
one has
$$
P^\varepsilon(k,x,B_{\varepsilon^{1-\gamma}}(Sing(X)))
\leq C\varepsilon^\gamma
$$
\end{lemma}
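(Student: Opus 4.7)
The plan is to bound the Lebesgue density of $P^\varepsilon(k,x,\cdot)$ and to combine it with the fact that $Sing(X)$ is a finite set of isolated hyperbolic points, so that
$$\operatorname{vol}(B_{\varepsilon^{1-\gamma}}(Sing(X)))\leq C_0\,\varepsilon^{m(1-\gamma)},$$
which for small $\gamma$ is much smaller than $\varepsilon^\gamma$. Writing $s=\dim E^s$ and $c=\dim E^c$, so that $s+c=m$ and $c\geq 2$ by the definition of a sectional-hyperbolic splitting, this leaves a budget of roughly $\varepsilon^{-s}$ for the $L^\infty$ norm of the iterated density.

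From the diffusion equation one has $p^\varepsilon(\tau,x,y)\leq C\varepsilon^{-m}\exp(-\dist(X_\tau x,y)/\varepsilon)$, and Chapman--Kolmogorov gives the trivial iterated bound $\|p^\varepsilon_k(x,\cdot)\|_\infty\leq C\varepsilon^{-m}$, which would produce only the useless $O(\varepsilon^{-m\gamma})$. The key step is to sharpen this for $k\geq\log(\varepsilon^{-m})$ by following the random orbit through the sectional-hyperbolic splitting $TM(X)=E^s\oplus E^c$: the noise injected at each step stays at scale $\varepsilon$ in $E^s$ because $E^s$ contracts, while the accumulated noise in $E^c$ saturates at scale $O(1)$ after $O(\log\varepsilon^{-1})$ steps by sectional expansion. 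Proposition \ref{SHtoH1} provides the invariant splitting along such shadowed pseudo-orbits, and the Volume Lemma \ref{VoluLemSH} supplies the Jacobian comparison needed to equate the iterated density to the reciprocal volume of the resulting $\varepsilon$-thin tube, yielding the improved bound $\|p^\varepsilon_k(x,\cdot)\|_\infty\leq C\varepsilon^{-s}$. Combining gives
$$P^\varepsilon(k,x,B_{\varepsilon^{1-\gamma}}(Sing(X)))\leq C\varepsilon^{-s}\cdot C_0\varepsilon^{m(1-\gamma)}=C'\varepsilon^{c-m\gamma}\leq C'\varepsilon^\gamma,$$
where the last inequality holds provided $\gamma$ is small enough that $c-m\gamma\geq\gamma$, i.e.\ $\gamma\leq c/(m+1)$.

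The main obstacle is that Proposition \ref{SHtoH1} requires the shadowed pseudo-orbit to remain at distance at least $C\delta\sim\varepsilon$ from $Sing(X)$, so the sharpened density bound is not directly available for paths that visit an $O(\varepsilon)$-neighborhood of the singular set. I would address this by decomposing the path measure into trajectories that remain outside $B_{C\varepsilon}(Sing(X))$ throughout times $0,\dots,k$, to which Propositions \ref{SHtoH1}--\ref{VoluLemSH} apply as above, and trajectories that enter that neighborhood at some intermediate step. For the latter I would use the local linearization at each hyperbolic singularity $\sigma$: by the unstable expansion, any orbit exits $B_{\varepsilon^{1-\gamma}}(\sigma)$ in $O(\log\varepsilon^{-1})$ steps unless it enters within an $\varepsilon$-tube about the local stable manifold of $\sigma$, and the probability of this event is controlled by a direct Gaussian-tail estimate using the kernel above. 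Summing the contributions of the finitely many singularities and combining with the ``far'' bound yields the desired $C\varepsilon^\gamma$.
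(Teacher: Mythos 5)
Your overall skeleton --- bound $P^\varepsilon(k,x,\cdot)$ by (sup of the iterated density) $\times$ ($\operatorname{vol}(B_{\varepsilon^{1-\gamma}}(Sing(X)))\leq C_0\varepsilon^{m(1-\gamma)}$), with the density improved from $\varepsilon^{-m}$ by expansion over $\sim\log(1/\varepsilon)$ steps --- is the same as the paper's. But there are two genuine gaps in the execution. First, the mechanism you invoke for the improved density bound is wrong as stated: sectional hyperbolicity does \emph{not} expand every direction of $E^c$ (the flow direction is neutral, and near a singularity some center directions contract), so the noise does not ``saturate at scale $O(1)$'' in $E^c$; what is true, and what the argument actually needs, is that the \emph{determinant} of $DF^l$ restricted to $E^c$ grows exponentially (this follows from condition (3) of sectional hyperbolicity applied to all $2$-planes in $E^c$), which spreads the mass over a tube of center-volume $\geq\const\cdot\lambda^{l}\varepsilon^{c}$. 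Crucially, this volume-expansion estimate holds \emph{uniformly on a neighborhood of $M(X)$, including near the singularities}, so it does not require Proposition \ref{SHtoH1} and its hypothesis that the pseudo-orbit stay $C\delta$-far from $Sing(X)$. By reaching for the hyperbolic splitting of Proposition \ref{SHtoH1} you have imported a difficulty (singularity avoidance) that this lemma does not actually have; that hypothesis is needed for Lemma \ref{l2}, not here.

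Second, your resolution of that (self-inflicted) difficulty does not work. The event that the chain enters an $\varepsilon$-tube about $W^s_{loc}(\sigma)$ is \emph{not} a Gaussian-tail event in the noise: since $X$ is transitive and $W^s(\sigma)$ accumulates on $M(X)$, the deterministic dynamics itself carries sets of uniformly positive measure into any such tube, so no large-deviation estimate makes this probability small. Moreover $k$ is only bounded below, so over $k$ steps the chain visits $B_{C\varepsilon}(Sing(X))$ with probability close to $1$, and your decomposition would have to control an uncontrolled recursion of entries and exits. The paper removes both problems at once with the Chapman--Kolmogorov identity
$$
P^\varepsilon(k,x,\Gamma)=\int_M P^\varepsilon(k-l,x,dz)\,P^\varepsilon(l,z,\Gamma)\leq\sup_{z\in M}P^\varepsilon(l,z,\Gamma),
$$
which reduces the claim to the fixed short time $l=\bigl[\tfrac{1}{2}\beta(\log(D+1))^{-1}\log(1/\varepsilon)\bigr]$ with arbitrary starting point $z$. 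Over so few steps every $\varepsilon^{1-\beta}$-pseudo-orbit stays within $\varepsilon^{1-2\beta}$ of the deterministic orbit of $z$, and a direct iterated-Gaussian computation in the style of Theorem II.2.1 of \cite{k} gives $P^\varepsilon(l,z,\Gamma)\leq\tilde C\varepsilon^{-m}\lambda^{-l}mes(\Gamma)$; plugging in $mes(\Gamma)\leq C\varepsilon^{m(1-\gamma)}$ and the choice of $l$ yields $C\varepsilon^\gamma$ for $\gamma$ small, with no case analysis near the singular set. I recommend you restructure your argument around this reduction; your ``far/near'' decomposition cannot be closed as sketched.
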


This means that the probability that a Markov chain arrives too close to the
singularities is very small, while the next one means that for those which do not get close to the
singularities we have the absolutely continuous property.

Recall that
${\mathcal B}(M)$ denotes the set of borelians
of $M$.
We denote by $[E,D]$ a rectangle
consisting of points $[x,y]=W^s_\eta(x)\pitchfork W_\rho^u(y)$, for local invariant manifolds, with
small enough $\eta$ and $\rho$ (see \cite{k} p. 142-143).
 
\begin{lemma}
\label{l2}
For every $\varepsilon>0$ small
there is $C>0$
such that
$\forall 1>\gamma>0$,
$\forall x\in M$ and
$\forall Q\in {\mathcal B}(M)$, with $\dist(Q,Sing(X))>0$,
of the form $Q=[E,D]$
one has
$$
I^\varepsilon_0(\varepsilon^{1-\gamma},n,x,Q)
\leq C mes^u(E) +O(\varepsilon),
$$
where:
\begin{itemize}
\item
$mes^u$ is the Lebesgue measure
in the unstable direction;
\item
$
I_0^\varepsilon(\rho,n,x,\Gamma)=
P^\varepsilon_x\{
\min_{0\leq k\leq n-1}dis(x^\varepsilon_k,Sing(\Lambda))\geq \rho
\mbox{ and }x^\varepsilon_n\in \Gamma\}$;
\item
$x^\varepsilon_k$ is the Markov chain
induced by the time $\tau$-map $X_{\tau}$;
\item
$P^\varepsilon_x\{x^\varepsilon_n\in \Gamma\}=
P^\varepsilon(n,x,\Gamma)$.
\item $n\geq(\log(\varepsilon))^2$
\end{itemize}
\end{lemma}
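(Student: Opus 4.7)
The plan is to adapt the proof of Proposition~2.1 of Kifer~\cite{k} to the sectional-Anosov setting, the hyperbolic estimates along the trajectory being produced from Lemma~\ref{ShadLemSH}, Proposition~\ref{SHtoH1} and Proposition~\ref{VoluLemSH} rather than from a global uniform hyperbolicity. The first move is to split the event defining $I_0^\varepsilon$ into a \emph{good} part
$$
G_n=\{(x^\varepsilon_k)_{0\leq k\leq n}:\dist(x^\varepsilon_{k+1},X_\tau(x^\varepsilon_k))\leq \delta\text{ for all }k<n\},
$$
with $\delta=\varepsilon\sqrt{|\log\varepsilon|}$, and its complement. The Gaussian-type decay of $p^\varepsilon$ yields $P^\varepsilon_x(G_n^c)=O(\varepsilon)$ on the relevant time-scale, so that the bad event is absorbed in the error term $O(\varepsilon)$; in order for the argument on $G_n$ to close at arbitrarily large $n$ one breaks the $n$ steps into blocks of length $\sim|\log\varepsilon|$ and carries out the shadowing on the final block, using the earlier blocks only through a stable-direction contraction estimate.

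On the good block the trajectory is a $\delta$-pseudo-orbit of $F=X_\tau$ with $\delta\ll\varepsilon^{1-\gamma}\leq\dist(x^\varepsilon_k,\si(X))$, so Lemma~\ref{ShadLemSH} produces a true orbit $\{F^ky\}$ that $O(|\log\varepsilon|\delta)=o(1)$-shadows it. Proposition~\ref{SHtoH1} at $y$ then provides a dominated splitting $T_{F^ly}M=\tilde H\oplus H^u$ with uniform angle bound $\alpha$ and uniform backward contraction on $H^u$ along the shadow. Integrating $H^u$ yields a local unstable disc at $F^ny$ of uniform radius, which together with the rectangle structure $Q=[E,D]$ allows me to parametrize the endpoints that fall into $Q$ by pairs $(\xi,\eta)\in E\times D$, with $\xi$ on a local unstable leaf and $\eta$ on a local stable leaf.

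Now I would write
$$
I_0^\varepsilon(\varepsilon^{1-\gamma},n,x,Q)=\int_Q p_{\mathrm{good}}^\varepsilon(n,x,z)\,dz+O(\varepsilon),
$$
and use the Gaussian kernel together with an iterated application of Proposition~\ref{VoluLemSH} along the shadow orbit to bound $p_{\mathrm{good}}^\varepsilon(n,x,\cdot)$ on the Bowen ball $K_\rho(y,n)$ by $CJ_n(y)$. Since $Q\cap K_\rho(y,n)$ has unstable cross-section of measure at most $C'J_n(y)^{-1}mes^u(E)$ by the same Volume Lemma, integration in the unstable direction yields $C\,mes^u(E)$; the stable-direction integral contributes only a universal constant because $p^\varepsilon$ contracts Gaussianly along stable leaves, a consequence of the exponential decay of $\|DF^k|_{E^s}\|$ combined with the Gaussian form of $p^\varepsilon$.

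The hard part, as always in this kind of argument, is to align the Gaussian kernel—centred at the noisy image $X_\tau(x^\varepsilon_k)$—with the hyperbolic coordinates defined along the deterministic shadow: the stable-direction contraction must be propagated inductively in the presence of noise, and the uniform lower angle bound $\alpha$ from Proposition~\ref{SHtoH1} is precisely what keeps the geometric series of noise errors from diverging. All of these estimates require that we stay at distance $\geq\varepsilon^{1-\gamma}$ from $\si(X)$, which is guaranteed by the conditioning built into $I_0^\varepsilon$ and is the reason Lemma~\ref{l1} had to be proved first.
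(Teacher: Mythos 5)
Your outline follows essentially the same route as the paper: replace the Markov chain by a $\delta$-pseudo-orbit of $F=X_\tau$ up to a small error, invoke the Shadowing Lemma \ref{ShadLemSH} (legitimate precisely because the conditioning in $I_0^\varepsilon$ keeps the chain at distance at least $\varepsilon^{1-\gamma}$ from $Sing(X)$), extract uniform hyperbolicity along the shadowing orbit from Proposition \ref{SHtoH1}, and then run the computation of Theorems 2.1 and 4.1 of \cite{k} with the Volume Lemma (Proposition \ref{VoluLemSH}) supplying the Jacobian control, the unstable integral producing $C\, mes^u(E)$ and the stable integral a constant. The only real deviations are your block decomposition of the $n$ steps, which the paper avoids by shadowing the whole pseudo-orbit at once (the shadowing error $Cn\delta$ is still negligible for $n\leq(\log\varepsilon)^4$ and $\delta=\varepsilon^{1-2\beta}$), and your omission of the covering of $E$ by the balls $W^u_\varepsilon(v_i)$ and of the discretization into reference orbits $z_{ijk}$, which is how the paper actually implements the parametrization by the rectangle $[E,D]$ that you describe; neither difference changes the substance of the argument.

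There is, however, one concrete quantitative slip. With the kernel decay $p^\varepsilon\sim\exp(-\dist(X_\tau(x),y)/\varepsilon)$ stated in the Introduction, your choice $\delta=\varepsilon\sqrt{|\log\varepsilon|}$ gives a one-step exceedance probability of order $\exp(-\sqrt{|\log\varepsilon|})$, which tends to zero more slowly than \emph{any} power of $\varepsilon$; summed over $n\geq(\log\varepsilon)^2$ steps this is not $O(\varepsilon)$, so the bad event $G_n^c$ cannot be absorbed into the error term as you claim. The paper takes $\delta=\varepsilon^{1-\beta}$ with $\beta$ small, for which the one-step failure probability is of order $\exp(-c\,\varepsilon^{-\beta})$ and the union bound over all $n$ steps is harmless, while $\delta$ is still small enough for the shadowing step. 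Substituting this choice of $\delta$ repairs your argument without altering anything else.
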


With these lemmas we
are ready to prove our theorem.

We know by definition that
$$
\mu^\varepsilon(\Gamma)=
\int_MP^\varepsilon(n,x,\Gamma)d\mu^\varepsilon(x),
$$
for every $n$, $x$ and $\Gamma$.
We shall need the inequality
\begin{equation}
\label{*}
P^\varepsilon(n,x,[E,D])
\leq C\cdot mes^u(E)+O(\varepsilon)
\end{equation}
because it implies
$$
\mu^\varepsilon([E,D])
\leq
C\cdot mes^u(E)+O(\varepsilon).
$$
If $\epsilon_i\to 0^+$ as $i\to\infty$
and $\mu^{\varepsilon_i}\to \mu^*$
then we have
$$
\mu^*([E,D])
\leq C\cdot mes^u(E)
$$ which proves that
$\mu^*$ is absolutely continuous
in the unstable direction and supported
in $M(X)$.
Since $M(X)$ is sectional-hyperbolic we have
that every point of
$M(X)$ has at least one positive Lyapunov exponent,
so $\mu^*$ is a SRB measure of $X$.
This argument shows that $M(X)$ supports
SRB measures.
Once we prove that there is only one SRB measure
in $M(X)$
we simultaneously prove the desired
stochastic stability.
So, the proof of Theorem
\ref{main} needs the following
claim:

\begin{claim}
\label{cl1}
$M(X)$ supports a unique SRB measure.
\end{claim}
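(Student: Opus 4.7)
The plan is to split the argument into two stages: first, produce an SRB measure as a weak-$*$ limit of the stationary measures $\mu^{\varepsilon_i}$; second, establish uniqueness by a Hopf-type argument that exploits transitivity and the local product structure on rectangles lying away from $Sing(X)$.

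\textbf{Existence.} Lemma~\ref{l2} together with the inequality (\ref{*}) shows that every weak-$*$ accumulation point $\mu^{*}$ of $\{\mu^{\varepsilon_i}\}$ is an $X_t$-invariant probability measure supported in $M(X)$ whose conditionals on local unstable plaques are absolutely continuous with respect to $mes^u$. Sectional-hyperbolicity forces exponential expansion of two-dimensional volumes in the central bundle, so Oseledets' theorem delivers at least one positive Lyapunov exponent at $\mu^{*}$-almost every point. Hence $\mu^{*}$ is an SRB measure of $X$.

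\textbf{Uniqueness.} Assume $\mu_1$ and $\mu_2$ are two distinct ergodic SRB measures on $M(X)$, and fix a continuous $\phi\colon M\to\mathbb{R}$ with $\int \phi\, d\mu_1 \neq \int \phi\, d\mu_2$. Choose a rectangle $R=[E,D]$ inside an open set $U$ whose closure misses $Sing(X)$ and which meets the supports of both $\mu_i$. Proposition~\ref{SHtoH1} furnishes a uniformly hyperbolic splitting along every orbit visiting $U$, and Proposition~\ref{VoluLemSH} supplies the bounded-distortion estimate needed to conclude that the stable and unstable holonomies inside $R$ are absolutely continuous. The classical Hopf argument then shows that the forward Birkhoff average $\phi^{+}$ is constant along local stable plaques and the backward average $\phi^{-}$ along local unstable plaques, and $\phi^{+}=\phi^{-}$ on a full Lebesgue-measure subset of $R$. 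Because the unstable conditionals of each $\mu_i$ are absolutely continuous, $R$ intersects each basin $B(\mu_i)$ in a set of positive Lebesgue measure; iterating the bracket operation inside $R$ and invoking transitivity of $X$, these two basins must be connected through $R$, forcing $\int\phi\, d\mu_1=\int\phi\, d\mu_2$, contradicting the choice of $\phi$.

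The main obstacle is running the Hopf argument cleanly in a setting where $\mu_i$-typical points may still recur arbitrarily close to $Sing(X)$. Proposition~\ref{SHtoH1} gives hyperbolic behaviour only along orbits that stay away from the singular set, so the stable/unstable holonomies on $R$ should be defined via the first-return map to $R$, with the bounded distortion of Proposition~\ref{VoluLemSH} accumulated across return segments. Carrying out this adapted Hopf argument---showing that absolute continuity of the holonomies survives the drift near $Sing(X)$ between consecutive returns---is the technically delicate step, and it mirrors the classical Anosov strategy once the return-map framework of Section~2 is in place.
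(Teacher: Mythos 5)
Your uniqueness argument takes a genuinely different route from the paper's, and the route you chose has a gap that the paper's route deliberately avoids. The paper does not run a Hopf argument at all. It uses the limit measure $\mu^*$ itself as a reference measure: since $\mu^*$ is absolutely continuous with respect to $mes^u$ and $M(X)$ is transitive, $\mu^*$ is positive on open sets in the unstable direction and hence \emph{equivalent} to $mes^u$; any other SRB measure $\nu^*$ is absolutely continuous with respect to $mes^u$, hence with respect to $\mu^*$, and ergodicity of $\nu^*$ then forces $\nu^*=\mu^*$. This is a three-line argument that never touches holonomies, return maps, or the behaviour of orbits near $Sing(X)$ --- precisely the points where your proposal runs into trouble. (Your existence paragraph coincides with what the paper does in the main text just before the claim; the claim itself is only about uniqueness.)

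The gap in your version is the one you yourself flag and then defer: absolute continuity of the stable/unstable holonomies for the first-return map to $R$, with distortion accumulated across excursions arbitrarily close to $Sing(X)$, is not a consequence of Propositions~\ref{SHtoH1} and~\ref{VoluLemSH} as stated --- both give control only along orbit segments that stay a definite distance from the singularities, whereas $\mu_i$-typical points recur near $Sing(X)$ with unbounded excursion times. Announcing that this ``mirrors the classical Anosov strategy'' does not close it; this is exactly the content that makes the singular case hard. A second unproved assertion is the existence of a single rectangle $R$ meeting both basins $B(\mu_1)$ and $B(\mu_2)$ in sets of positive Lebesgue measure: a priori the supports of two SRB measures could be disjoint invariant sets, and ``invoking transitivity'' to connect them through one rectangle requires an argument (e.g.\ density of unstable leaves) that you do not supply. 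Both difficulties evaporate in the paper's approach, because there one only needs that the \emph{particular} measure $\mu^*$ dominates $mes^u$, which follows from the estimate $\mu^*([E,D])\leq C\cdot mes^u(E)$ already established, together with transitivity.
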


\begin{proof}
Since $\mu^*$ is absolutely continuous
with respect to $mes^u$
and $M(X)$ is transitive,
we obtain that
$\mu^*$ is positive in open sets in the unstable direction
so it is equivalent to $mes^u$.
If $\nu^*$ were another SRB measure
supported in $M(X)$ then
$\nu^*$ is absolutely continuous with respect to
$mes^u$ so it is also absolutely continuous
with respect to $\mu^*$.
But $\nu^*$ is ergodic as it is SRB
so $\nu^*$ and $\mu^*$ are the same.
This proves the claim.
\end{proof}

Now we turn on to the proof of
(\ref{*}).
This is merely a computation using
lemmas \ref{l1} and \ref{l2}:
For all $n$, $\rho$ and $\Gamma$ as above one has
$$
P^\varepsilon(n,x,\Gamma)=
P^\varepsilon_x\{x_n^\varepsilon\in \Gamma\}=
P^\varepsilon_x\{
\min_{0\leq k\leq n-1}dist(x^\varepsilon_k,Sing(X))\geq \rho
\mbox{ and }x^\varepsilon_k\in \Gamma\}$$
$$+ P^\varepsilon_x\{
\exists k:dist(x^\varepsilon_n,Sing(X))<\rho, \mbox{\ and\ } x^\varepsilon_n\in\Gamma\}
$$
$$
=I^\varepsilon_0(\rho,n,x,\Gamma)
+
P^\varepsilon_x\{
\exists k:dist(x^\varepsilon_k,Sing(X)<\rho, \mbox{\ and\ } x^\varepsilon_n\in\Gamma\}.
$$
Replacing $\Gamma=[E,D]$ and taking
$n\in [(\log\varepsilon)^{2},(\log\varepsilon)^{4}]$
and $\rho=\varepsilon^{1-\gamma}$
we obtain by lemmas \ref{l1}-\ref{l2} that
$$
P^\varepsilon(n,x,[E,D])\leq
C\cdot mes^u(E)+O(\varepsilon)
$$
proving (\ref{*}).

\section{Proof of Lemma \ref{l2}}

\noindent
In the flow case(\footnote{We are in the diffeomorphism case $F=X_\tau$ for some $\tau>1$}),
set $\tilde W_\rho^s(\Gamma)=\bigcup_{|t|\leq\rho}W_\rho^s(\Gamma)$. For diffeomorphisms, set $\tilde W_\rho^s(\Gamma)=W_\rho^s(\Gamma)$.

It is enought to prove the lemma for the case where $E=W_\eta^u(z)$ and $D=W_\rho^s(z)\cap M(X)$, for all $z\in M(X)$ and $\rho,\eta>0$.

Choose $v_i\in E$ for $i=1,\ldots,k_\varepsilon$ such that
$$
E\subset \bigcap_{i}W_\varepsilon^u(v_i)\mbox{\qquad and \qquad} \sum_{i}m^u(W_\varepsilon^u(v_i))\leq 3^m m^u(E).
$$

Denote 

$$
I_1(\rho,\delta,n,x,\Gamma)=P_x^\varepsilon\left\{\begin{array}{l}\min_{0\leq k\leq n-1}
dis(x^\varepsilon_k,Sing(X))\geq \rho,\\ \dist(Fx_i^\varepsilon,x_{i+1}^\varepsilon)<\delta,
i=0,\ldots,n-1\mbox{\ \ and\ \ } \\ x_n^\varepsilon\in\Gamma\end{array}\right\}
$$

Then for $\varepsilon$ small enough
$$
I_0^\varepsilon(\rho,n(\varepsilon),x,\tilde W_\rho^s(E))\leq I_1(\rho,\delta(\varepsilon),n(\varepsilon),x,
\tilde W_\rho^s(E)\cap W_{\varepsilon^{1-2\beta}}^s(M(X)))+
{\mathcal O}(\varepsilon).
$$
where $\delta(\varepsilon)$ is chosen so that we can approximate
transition probabilities of Markov chains with transition probabilities of Markov chains that are
also $\delta(\varepsilon)$-pseudo-orbits. If we choose $\delta(\varepsilon)=\varepsilon^{1-\beta}$
with $0<\beta<\alpha$ and small (to be chosen later) the error is of the order of $\exp(-\beta/3)$. Also,
to make this approximation we need $n(\varepsilon)>(\log(\varepsilon)^2$.
But,
$$I_1(\rho,\delta(\varepsilon),n(\varepsilon),x,
\tilde W_\rho^s(E)\cap W_{\varepsilon^{1-2\beta}}^s(\Lambda))\leq \sum_{i=1}^{k_\varepsilon}I_1^\varepsilon(\rho,\delta(\varepsilon),n(\varepsilon),x,
A_i^\varepsilon),
$$
where $A_i^\varepsilon=\tilde W_\rho^s(E_i^\varepsilon)$, $E_i^\varepsilon=W_\varepsilon^u(v_i)$.

From the definition, for each $x\in W_{\varepsilon^{1-2\beta}}^s(M(X))$ there exists $\tilde x$ such that
$x\in W_{\varepsilon^{1-2\beta}}^s(\tilde x)$ and $\tilde x\in M(X)$.

For each $\tilde x$, take $G_j^\varepsilon=\overline{W_{j\varepsilon}^u(\tilde x)
\backslash W_{(j-1)\varepsilon}^u(\tilde x)}$, $j=1,2,\ldots,\lfloor\varepsilon^{-4\beta}\rfloor+1$.
So, for each $i,j$ we have that $G_j^\varepsilon\cap F^{n(\varepsilon)}W_{2\rho}^s(v_i)$ consists of points $z_{ijk}$ for $k=1,\ldots,k^{ij}$.

If $w=(x,y,\ldots,y_n)$ is a $\delta(\varepsilon)$-pseudo-orbit, with $x\in W_{\varepsilon^{1-2\beta}}^s(\tilde x)$,
$\tilde x\in \Lambda$, $y_n\in A_i^\varepsilon$ then there exists $y^w$ such that $\dist(y_l,F^ly^w)\leq Cn\varepsilon^{1-2\beta}$,
for all $l=10,\ldots, n$, using the Shadowing Lemma (see Lemma \ref{ShadLemSH}),
where $y^w\in \tilde W_{\varepsilon^{1-4\beta}}^s(W_{\varepsilon^{1-4\beta}}^u(\tilde x))$
and $F^{n(\varepsilon)}y^w\in \tilde W_{\varepsilon^{1-4\beta}}^s(W_{\varepsilon^{1-4\beta}}^u(A_i^\varepsilon))$.

That is, there exists $i,j,k$ such that $j\leq \lfloor \varepsilon^{-4\beta}\rfloor+1$ and
$ \dist(F^ly^w,F_ lz_{ijk})\leq \frac{1}{2}\varepsilon^{1-5\beta}$ for all $l=0,\ldots,n(\varepsilon)$.

Then, $\dist (y_l,F^lz_{ijk})\leq \varepsilon^{1-5\beta}$  and
\begin{equation}\label{desigI2}
I_1^\varepsilon(\rho,\delta(\varepsilon),n(\varepsilon),x,
A_i^\varepsilon)\leq \sum_{j\leq\lfloor\varepsilon^{-4\beta}+1\rfloor}I_2^\varepsilon(\varepsilon^{1-5\beta},
\delta(\varepsilon),n(\varepsilon),z_{ijk},A_i^\varepsilon)
\end{equation}
where
\begin{eqnarray*}
I_2^\varepsilon(\rho,\delta,n,z,\Gamma)&=&P_x^\varepsilon
\left\{\begin{array}{l}\min_{0\leq k\leq n-1}dis(x^\varepsilon_k,Sing(\Lambda))\geq \rho\\
\dist(x_l^\varepsilon,F^lz)\leq \delta\\ x_n^\varepsilon\in\Gamma\end{array}\right\}\\
&=&\int_{U_\rho(Fz)}\cdots \int _{U_\rho(F^{n-1}z)}\int _{U_\rho(F^{n}z)\cap\Gamma}
q_{Fx}^\varepsilon(y_1)\cdots q_{Fy_{n-1}}^\varepsilon(y_n)\\ && \qquad\qquad \qquad \qquad\qquad \qquad \cdots dm(y_1) \cdots dm(y_n)\\
\end{eqnarray*}
That is, for every $z=z_{ijk}$ we have that $I_2$ in the sum of equation (\ref{desigI2}) is less or equal 
\begin{eqnarray*}
& \leq & 
(1+\varepsilon^\alpha)
\int_{U_{\varepsilon^{1-4\beta}}(Fz)}\cdots \int _{U_{\varepsilon^{1-4\beta}}(F^{n-1}z)}\int _{U_{\varepsilon^{1-4\beta}}(F^{n}z)\cap A_i^\varepsilon}\\
& & 
\varepsilon^{-m}r_{Fx}(\frac{1}{\varepsilon}\exp_{Fx}^{-1}(y_1))\cdots
\varepsilon^{-m}r_{Fy_{n-}}(\frac{1}{\varepsilon}\exp_{Fy_{n-1}}^{-1}(y_n))
dy_1\cdots dy_n
\end{eqnarray*}

After this preparation we can lift the problem to the tangent bundle in the same way
as in Theorem 4.1 of Kifer \cite{k} which essentially uses the Volumen Lemma \ref{VoluLemSH} and Theorem 2.1 and 3.10 of \cite{k}.
Observe that we can use Theorem 2.1 of  \cite{k} because for pseudo-orbits not aproaching the set $Sing(\Lambda)$  our transformations
behaves like a hyperbolic one, see propositions \ref{SHtoH1} and \ref{VoluLemSH}.\hfill$\square$

\section{Proof of Lemma \ref{l1}}

\noindent
By the Chapman-Kolmogorov formula for any $l<k$ one has
\begin{eqnarray}P^\varepsilon(k,x,B_{\varepsilon^{1-\gamma}}(Sing(X))&=&
\int_M P^\varepsilon(k-l,x,dz)P^\varepsilon(l,z,B_{\varepsilon^{1-\gamma}}(Sing(X))\\
&\leq&\nonumber \sup_{z\in M}P^\varepsilon(l,z,B_{\varepsilon^{1-\gamma}}(Sing(X))
\end{eqnarray}
so if the conclusion of Lemma \ref{l1} is true for $k=l$ it remains true for any $k\geq l$.

Take \begin{equation}D=\sup_{x\in M}||F'(x)||<\infty.\end{equation}
Define 
\begin{equation}\label{des1.33}
l=\left[\frac{1}{2}\beta(\log(D+1))^{-1}\log(\frac{1}{\varepsilon})\right]
\end{equation}
where $1>\beta>0$ and $[\ \cdot\ ]$ means the integral part. It is not dificult to show that with this definition we have
\begin{equation}\label{des1.34}
(D+1)^{l+1}\leq e^{-\beta/2}
\end{equation}
We also use $\beta$ to aproximate the probability with $\varepsilon^{1-\beta}$-pseudo-orbits using Lemma 1.1 of
\cite{k} (p. 101) and obtain the following inequality
\begin{equation}
P^\varepsilon(l,x,\Gamma)\leq I_1(\varepsilon^{1-\beta},l,\Gamma)+Cl\varepsilon^{-2m}mes(\Gamma)\exp(-\frac{\alpha\varepsilon^{1-\beta}}{2\varepsilon})$$ 
$$\leq I_1(\varepsilon^{1-\beta},l,\Gamma)+\exp(-\frac{\alpha}{3\varepsilon^\beta})
\end{equation}
Where $$I_1(\varepsilon^{1-\beta},l,x,\Gamma)=$$ $$=P^\varepsilon\{(dist(F(X_i^\varepsilon),X_{i+1}^\varepsilon)<\varepsilon^{1-\beta}
\mbox{\ \ for all\ \ }i=1,\ldots,l-1\mbox{\ \ and\ \ }X^\varepsilon_l\in \Gamma\}$$

From the continuity of $F$ and (\ref{des1.34}) every $\varepsilon^{1-\beta}$-pseudo-orbit $y_0=x,y_1,\ldots,y_l$ satisfies
\begin{equation}
\label{des1.35}dist(F^i(x),y_i)<\varepsilon^{1-2\beta}\mbox{\ \ for all\ \ } i=0,\ldots,l.
\end{equation}
So, we can write
\begin{equation}\label{des1.37}
I_1(\varepsilon^{1-\beta},l,x,\Gamma)\leq I_3^\varepsilon(l,x,\Gamma)
\end{equation}
where $2\beta<\alpha$, and
\begin{eqnarray}\hskip20pt
 I_3^\varepsilon(l,x,\Gamma)&=&P_x^\varepsilon\{X_i^\varepsilon\in U^{(i)}\mbox{\ for all\ }
i=1,\ldots,l\mbox{\ and\ } X_l^\varepsilon\in\Gamma\}\\
\nonumber&=&\int_{U^{(0)}}\hskip-6pt\ldots\int_{U^{(l-1)}}\hskip-2pt
\int_{U^{(l)}\cap\Gamma}\hskip-4pt q_{Fx}^\varepsilon(y_1)q_{Fy_1}^\varepsilon(y_2)\ldots q_{Fy_{l-1}}^\varepsilon(y_l)dy_1\ldots dy_l \\
\nonumber &\leq & (1+\varepsilon^\alpha)^l
\int_{U^{(0)}}\hskip-6pt\ldots\int_{U^{(l-1)}}\hskip-2pt
\int_{U^{(l)}\cap\Gamma}\hskip-4pt
\varepsilon^{-m} r_{Fx}(\frac{\sigma(Fx,y_1)}{\varepsilon})\\
&&\times\ \ \varepsilon^{-m} r_{Fy_1}(\frac{\sigma(Fy_1,y_2)}{\varepsilon})\ldots
\varepsilon^{-m} r_{Fy_{l-1}}(\frac{\sigma(F_{l-1},y_l)}{\varepsilon})dy_1\ldots dy_l.
\end{eqnarray}
where $U^{(i)}=\{v:dist(v,z_i)<\varepsilon^{1-2\beta}\}$. Since of our choice of the points $z_i$, we can have
$$||Fy_i-y_{i+1}||=||Fy_i-z_{i+1}+z_{i+1}-y_{i+1}||\leq ||Fy_i-z_{i+1}||+|F'(z_i)(z_i-y_i)|+ord(\varepsilon^{2-5\beta})$$
provided $\beta<\frac{1}{5}$

This will lead to an expresion that can be bounded by
\begin{eqnarray}
I_4^\varepsilon(l,x,\Gamma)&=&\int_{\re^n}\ldots\int_{\re^n}\int_{\Gamma}
\varepsilon^{-m}r_{z_1}(\frac{\eta_1}{\varepsilon})\varepsilon^{-m}
r_{z_1}(\frac{\eta_2-F'z_1(y_i-z_i)}{\varepsilon})\ldots\\
&&\times\ \  \varepsilon^{-m}r_{z_l}(\frac{\eta_l-Fz_{l-1}(y_{l-1}-z_{l-1})}{\varepsilon})
d\eta_1\ldots d\eta_l
\end{eqnarray}

From here we con follow the same calculation used in the proof of Theorem II.2.1 of \cite{k} and obtain
\begin{equation}
I_4^\varepsilon(l,x,\Gamma)\leq \tilde C\varepsilon^{-m}\lambda^{-l} mes({\Gamma})
\end{equation}
where $\lambda$ is the expansion rate of the volume. Now taking $U_{\varepsilon^{1-\gamma}}(Sing(\Lambda))$ and $l$ as in (\ref{des1.33})
we arrive to the conclusion of the lemma.\hfill$\square$

\end{document}